\documentclass[a4paper,10pt,twoside]{article}
\usepackage{definitions/CMST}
\usepackage[ps,arrow,matrix,tips,curve]{xy}
\usepackage{amsmath,amsthm}
\usepackage{booktabs}

\newtheorem{theorem}{Theorem}
\newtheorem{corollary}{Corollary}[theorem]
\newtheorem{lemma}{Lemma}[theorem]

\newcommand\addTitle{$3n + 3^k$ Problem}
\newcommand\addShortTitle{$3n + 3^k$ Problem}

\newcommand\addAuthors{D. Barina$^{1*}$, W.C. Maat$^{2}$}
\newcommand\addAuthorsNoSuperscript{D. Barina, W.C. Maat}
\newcommand\addAffiliations{%
$^1$Brno University of Technology\\
Faculty of Information Technology\\
Bozetechova 2, 638 00 Brno\\
$^*$E-mail: ibarina@fit.vutbr.cz\\[5pt]

$^2$(independent researcher)\\
E-mail: willem@ai-brain.net
}

\newcommand\addAbstract{The Collatz problem is generalized into $3n + 3^k$ problem.
It is shown that as long as the Collatz function iterates converge to the cycle passing through the number 1, the $3n + 3^k$ sequence converges to the cycle passing through the number $3^k$ for arbitrary positive integers $n$ and $k$.
The proof shows that the sequence of $3n + 3^k$ function iterates for a number $3^k n$ is exactly the sequence of the Collatz function iterates for $n$ multiplied by~$3^k$.}
\newcommand\addKeywords{Collatz conjecture, number theory, dynamical systems}

\begin{document}
\maketitle
\begin{multicols}{2}

Collatz problem \cite{Lagarias2003,Lagarias2006} is number theory problem that provides an algorithm for generating a sequence.
The algorithm is as follows.
Start with arbitrary positive integer $n$.
If $n$ is even, divide it by two.
Else if $n$ is odd, multiply it by three and add one.
The conjecture says that we will always arrive at the number 1.

More formally, the Collatz conjecture asserts that a sequence defined by repeatedly applying the function
\begin{align}
	T_0(n) &= \begin{cases}
			(3n + 1) / 2, & \text{for odd $n$,}\\
			n / 2,        & \text{for even $n$,}\\
		\end{cases}
\end{align}
always converges to the cycle passing through the number 1 for arbitrary positive integer $n$.

\begin{theorem}
As long as the $T_0$ iterates converge to 1,
a sequence defined by repeatedly applying the function
\begin{align}
	T_k(n) &= \begin{cases}
			(3n + 3^k) / 2, & \text{for odd $n$,}\\
			n / 2,          & \text{for even $n$,}\\
		\end{cases}
\end{align}
converges to the cycle passing through the number $3^k$ for arbitrary positive integers $n$ and $k$.
\end{theorem}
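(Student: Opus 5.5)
Our plan is to reduce the dynamics of $T_k$ to those of $T_0$ through two observations: a conjugation on multiples of $3^k$, and a bound on the $3$-adic valuation along $T_k$-orbits showing every orbit eventually lands in $3^k\mathbb{Z}$.

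\textbf{Step 1 (conjugation).} For every positive integer $m$ we have $T_k(3^k m) = 3^k T_0(m)$. The reason is that $3^k m$ and $m$ have the same parity. If $m$ is odd, then $T_k(3^k m) = (3\cdot 3^k m + 3^k)/2 = 3^k (3m+1)/2$. If $m$ is even, then $T_k(3^k m) = 3^k m/2$. By induction, $T_k^{\,j}(3^k m) = 3^k T_0^{\,j}(m)$ for all $j$. Hence, under the hypothesis that $T_0$-orbits reach $1$, the $T_k$-orbit of $3^k m$ reaches $3^k$. The cycle $3^k \mapsto 2\cdot 3^k \mapsto 3^k$ is the image of the cycle $1\mapsto 2\mapsto 1$.

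\textbf{Step 2 (arbitrary $n$ reaches a multiple of $3^k$).} We track $v = v_3(n)$, the exponent of $3$ in $n$. Division by $2$ does not change $v$. If $n$ is odd with $v<k$, then $3n+3^k = 3^{v+1}(n/3^v + 3^{k-v-1})$. When $v+1<k$, the second summand is divisible by $3$ but $n/3^v$ is not, so the new valuation is exactly $v+1$. When $v+1=k$, we get $3n+3^k = 3^k(n/3^v+1)$, with valuation at least $k$. Finally, $3^k\mathbb{Z}$ is invariant under $T_k$ (as in Step~1). So each odd step raises $v$ by one until it reaches $k$, after which the orbit stays in $3^k\mathbb{Z}$.

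The main obstacle is to show that odd steps actually occur, so the orbit does not simply stall. This is immediate: even steps halve $n$, so only finitely many consecutive even steps can happen before an odd value appears. Thus after at most $k$ odd steps the orbit enters $3^k\mathbb{Z}$, say at $3^k m$ with $m\ge 1$.

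Step 1 then applies from that point. Assuming the $T_0$-orbit of $m$ reaches $1$, the $T_k$-orbit of $n$ reaches $3^k$ and then cycles through $3^k, 2\cdot 3^k$. This proves the theorem.
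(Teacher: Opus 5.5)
Your proof is correct, but it is organized differently from the paper's. In its proof the paper never conjugates by $3^k$ directly. It proves only the one-step identity $3\,T_k(n) = T_{k+1}(3n)$ and then argues, in effect by induction on $k$ with the Collatz hypothesis for $T_0$ as base case. An odd non-multiple of $3$ becomes a multiple of $3$ after one $T_{k+1}$-step, since $3m+3^{k+1}$ is divisible by $3$. From then on the $T_{k+1}$-orbit is $3$ times a $T_k$-orbit, so it reaches $3^{k+1}$ by the induction hypothesis. The identity $T_k(3^k n) = 3^k T_0(n)$ appears only afterwards, as a corollary.

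You instead conjugate by $3^k$ at once and replace the induction by bookkeeping of $v_3$. Each odd step raises the valuation by one until it reaches $k$, and $3^k\mathbb{Z}$ is $T_k$-invariant.

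The paper's route needs only the trivial divisibility-by-$3$ fact at each level. However, it relies on an induction hypothesis covering all $T_k$-orbits that the text never states, and its displayed odd step $(3m+3)/2$ is literally only the $k=0$ instance. Your route invokes the $T_0$ hypothesis once and makes the mechanism explicit. It also gives a quantitative by-product: the orbit of $n$ first enters $3^k\mathbb{Z}$ after exactly $\max(0,\,k - v_3(n))$ odd steps.
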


Note that the $3n + 3^k$ problem was recently discussed in \cite{Boulkaboul2022}.

\begin{proof}
The function $T_{k}(n)$ can be adjusted for multiples of three using
\begin{align} \label{eqn:3T0n}
	3 \cdot T_{k}(n) = \begin{cases}
			(3 \cdot 3n + 3^k \cdot 3) / 2, & \text{for odd $n$,}\\
			3n/2,                           & \text{for even $n$.}\\
		\end{cases}
\end{align}
Now substitute $3n$ for $m$ ($m$ is a multiple of 3), thus ${3 \cdot T_{k}(n) = T_{k+1}(3n)}$, and therefore
\begin{align} \label{eqn:T1m}
	T_{k+1}(m) = \begin{cases}
			(3m + 3^{k+1}) / 2, & \text{for odd $m$,}\\
			m/2,                & \text{for even $m$.}\\
		\end{cases}
\end{align}
Note that $3n$ is odd when $n$ is odd and $3n$ is even when $n$ is even.
It means that the sequence of $T_{k+1}$ iterates for $m=3n$ is exactly the sequence of $T_{k}$ iterates for $n$ multiplied by 3.

Now we know what happens to the trajectory of $T_{k+1}$ if $m$ is a multiple of 3.
It remains to show what happens to this trajectory if $m$ is not a multiple of 3.
If such a number is even, then we can repeatedly pull out all the factors of 2 (the even branch of $T_{k+1}$), and finally get an odd number.
Thus we focus on odd $m$.
A single iterate of the $T_{k+1}$ function give us $(3m + 3) / 2$ (the odd branch of $T_{k+1}$), where $(3m + 3) / 2$ is a multiple of 3 ($3m+3$ is obviously multiple of 3, the division by $2$ has no effect on divisibility by 3).
Note that the iterates of $T_{k+1}$ converge to the cycle passing through the number $3^{k+1}$, which corresponds to $T_0(n) = 1$.
\end{proof}

\begin{corollary}
	$T_k$ iterates always lead to the cycle $1 \cdot 3^k \rightarrow 2 \cdot 3^k \rightarrow 1 \cdot 3^k$, assuming the Collatz conjecture holds.
\end{corollary}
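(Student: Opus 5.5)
The corollary follows from the Theorem once we identify the terminal cycle explicitly. The plan has two steps: reduce any $T_k$ trajectory to a scaled $T_0$ trajectory, then read off the limiting cycle of $T_0$ and scale it back.

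\textbf{Scaling identity.} For every $k\ge 0$ and every positive integer $n$,
\begin{align*}
	T_k(3^k n) = 3^k \, T_0(n).
\end{align*}
This holds because $3^k n$ has the same parity as $n$. If $n$ is odd, $(3\cdot 3^k n + 3^k)/2 = 3^k (3n+1)/2$. If $n$ is even, $3^k n/2 = 3^k\cdot n/2$. By induction on the number of iterations, $T_k^{(j)}(3^k n) = 3^k\, T_0^{(j)}(n)$ for all $j$. This is the identity established in the proof of the Theorem; I would prove it directly for general $k$ rather than chaining $k$ single steps.

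\textbf{Reaching a multiple of $3^k$.} Take an arbitrary $m$. Write $v = v_3(m) < k$; if $v\ge k$ we are already done. I claim that some iterate has $3$-adic valuation at least $v+1$, and that the valuation never drops below $\min(v,k)$.
\begin{itemize}
	\item Even steps do not change $v_3$.
	\item On an odd step with $v<k$, write $m=3^v u$ with $3\nmid u$. Then $3m+3^k = 3^{v+1}(u + 3^{k-v-1})$, so the valuation rises to at least $v+1$. Division by $2$ does not affect divisibility by $3$.
\end{itemize}
The trajectory cannot stay even forever, since repeated halving of a positive integer eventually gives an odd number. So the valuation strictly increases at each odd step until it reaches $k$. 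Once $v\ge k$, the same computation, $3m+3^k=3^k(3\cdot m/3^k+1)$, shows that multiples of $3^k$ stay multiples of $3^k$. Hence after finitely many steps the trajectory lies in $3^k\mathbb{Z}_{>0}$, say at $3^k n$.

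\textbf{Conclusion.} From $3^k n$ on, the scaling identity makes the $T_k$ trajectory equal to $3^k$ times the $T_0$ trajectory of $n$. Assuming the Collatz conjecture, that $T_0$ trajectory reaches $1$. Since $T_0(1)=2$ and $T_0(2)=1$, it then cycles as $1\to2\to1$. Multiplying by $3^k$ gives the cycle $3^k\to 2\cdot3^k\to 3^k$, which one also checks directly: $T_k(3^k)=(3^{k+1}+3^k)/2=2\cdot 3^k$ and $T_k(2\cdot3^k)=3^k$.

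The main obstacle is the valuation-raising step for general $m$. The Theorem's proof handles it only loosely: it uses $3^{k+1}$ but writes $3m+3$. The valuation argument above is the rigorous replacement, and the key point to verify carefully is that $u+3^{k-v-1}$ is an integer, which holds exactly because $v\le k-1$.
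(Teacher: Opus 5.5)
Your proof is correct, but it is organized differently from the paper's.

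\textbf{The paper's route.} The paper proves the Theorem by an implicit induction on $k$. It uses only the one-step conjugacy $T_{k+1}(3n)=3\,T_k(n)$ and the observation that a single odd step of $T_{k+1}$ lands on a multiple of $3$. So every $T_{k+1}$ trajectory is eventually $3$ times a $T_k$ trajectory, which by the inductive hypothesis ends in the cycle through $3^k$. The corollary then reads off the cycle $1\to2\to1$ of $T_0$ and scales it.

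\textbf{Your route.} You conjugate by $3^k$ in one step; this is the paper's second corollary, $T_k(3^kn)=3^kT_0(n)$. You then replace the induction by a $3$-adic valuation count: $\min(v_3,k)$ is nondecreasing along the orbit and strictly increases at each odd step until it reaches $k$.

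\textbf{Comparison.} The two arguments use the same mechanism: your valuation count is the paper's induction unrolled. Unrolled, the induction also gives your bound that the orbit reaches a multiple of $3^k$ after at most $k$ odd steps. Your version is self-contained, never needs the statement for the intermediate maps $T_1,\dots,T_{k-1}$, and makes that bound explicit. The paper's version needs nothing beyond $3m+3^{k+1}\equiv 0 \pmod 3$.

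\textbf{On your criticism of the paper.} Your remark is only half right. The expression $(3m+3)/2$ in the Theorem's proof is indeed a slip for $(3m+3^{k+1})/2$. However, the slip does not damage the argument, because divisibility by $3$ is all the inductive step requires. Your valuation argument is therefore an alternative to the paper's induction, not a repair of a broken step.
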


The iterates of $T_0$ lead to the cycle $1 \rightarrow 2 \rightarrow 1$, where as the iterates of $T_1$ go to $3 \rightarrow 6 \rightarrow 3$, etc.

\begin{corollary}
The sequence of $T_k$ iterates for a number $3^k n$ is exactly the sequence of $T_0$ iterates for $n$ multiplied by $3^k$.
\end{corollary}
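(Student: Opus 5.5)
The plan is to prove the identity $T_k(3^k n) = 3^k\, T_0(n)$ for every positive integer $n$ and every $k \ge 0$, and then iterate it. This one-step identity follows directly from the definitions, and the trajectory statement is then an induction on the number of iterations.

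For the one-step identity we use parity. Since $3^k$ is odd, $3^k n$ has the same parity as $n$, so both functions use the same branch. If $n$ is even,
\begin{align*}
T_k(3^k n) = 3^k n/2 = 3^k \cdot (n/2) = 3^k\, T_0(n).
\end{align*}
If $n$ is odd,
\begin{align*}
T_k(3^k n) = (3\cdot 3^k n + 3^k)/2 = 3^k (3n+1)/2 = 3^k\, T_0(n).
\end{align*}
Alternatively, the relation $3\cdot T_k(n) = T_{k+1}(3n)$ established in the proof of the Theorem can be applied $k$ times, starting from $T_0$. Either way the identity holds for all $n$ and $k$.

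Next, by induction on $j \ge 0$, we show $T_k^{\,j}(3^k n) = 3^k\, T_0^{\,j}(n)$. The case $j=0$ is trivial. For the inductive step, write $n_j = T_0^{\,j}(n)$, which is a positive integer. Then
\begin{align*}
T_k^{\,j+1}(3^k n) = T_k(3^k n_j) = 3^k\, T_0(n_j) = 3^k\, T_0^{\,j+1}(n),
\end{align*}
using the one-step identity applied to $n_j$. Hence the entire $T_k$ trajectory of $3^k n$ is the $T_0$ trajectory of $n$ multiplied termwise by $3^k$, which is the statement of the corollary.

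There is no real obstacle here. The only point needing care is the parity observation, namely that multiplication by the odd number $3^k$ preserves parity, since this is what guarantees that both maps select the same branch at every step.
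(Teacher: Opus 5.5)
Your proposal is correct and is essentially the paper's argument. The paper obtains $T_k(3^k n) = 3^k\, T_0(n)$ from the relation $3\,T_k(n) = T_{k+1}(3n)$ plus the observation that multiplying by $3$ preserves parity, and later re-derives the same one-step identity as the conjugacy $T_k = L_{3^k}^{-1}\circ T_0\circ L_{3^k}$, which is exactly your direct computation; your induction on the number of iterations simply makes precise the paper's ``we can keep doing this'' step.
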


In short, $T_k(3^k n) = 3^k T_0(n)$.
See an example for $T_0$, $T_1$ and $T_2$ in Table~\ref{tab:1}.

\begin{table*}[htp]
	\centering
	\footnotesize
	\setlength{\tabcolsep}{2.5pt}
	\begin{tabular}{l@{\hspace{6pt}} *{7}{c}}
		\toprule
		\bfseries Function & \multicolumn{7}{c}{\bfseries Iteration} \\
		\cmidrule(l){2-8}
		& 0 & 1 & 2 & 3 & 4 & 5 & 6 \\
		\midrule
		\bfseries $T_2$
		& 189 & 288 & 144 & 72 & 36 & 18 & 9 \\
		\bfseries $T_1$
		&  63 &  96 &  48 & 24 & 12 &  6 & 3 \\
		\bfseries $T_0$
		&  21 &  32 &  16 &  8 &  4 &  2 & 1 \\
		\bottomrule
	\end{tabular}
	\caption{$T_0$, $T_1$ and $T_2$ function iterates.}%
	\label{tab:1}
\end{table*}

Let's start $T_1$ iterations with an odd starting number $n$.
The first $T_1$ take us to $(3n+3)/2$, which is a number of the form $3m/2$.
Meanwhile take the even number $n+1$ and subject it to iteration $T_0$.
The result is $(n+1)/2$ (since the $n+1$ was even), which is the number of the form $m/2$.
We can see that further $T_1$ iterates are exactly the $T_0$ iterates multiplied by 3.
Thus, there is a relationship between the initial number $n$ for $T_1$ and the number $n+1$ for $T_0$.
The example for numbers 63 and 64 is given in Table~\ref{tab:2}.


\begin{table*}[htp]
	\centering
	\footnotesize
	\setlength{\tabcolsep}{2.5pt}
	\begin{tabular}{l@{\hspace{6pt}} *{7}{c}}
		\toprule
		\bfseries Function & \multicolumn{7}{c}{\bfseries Iteration} \\
		\cmidrule(l){2-8}
		& 0 & 1 & 2 & 3 & 4 & 5 & 6 \\
		\midrule
		\bfseries $T_1$
		& 63 & 96 & 48 & 24 & 12 & 6 & 3 \\
		\bfseries $T_0$
		& 64 & 32 & 16 &  8 &  4 & 2 & 1 \\
		\bottomrule
	\end{tabular}
	\caption{$T_0$ and $T_1$ function iterates for $n+1$ and $n$ starting values.}%
	\label{tab:2}
\end{table*}

In \cite{Lagarias1990} 
Lagarias extends $T_0(n)$ to the rational numbers with odd denominator. And he shows that any rational element $q$ of cycle of $T_0(n)$ must be of the form
\begin{align}
q=\frac{\sum_{i=0}^{l-1}3^i2^{a_i}}{2^m-3^{l}}
\end{align}
for some $l\geq0$, $m>l$ and $m>a_0>a_1>\ldots>a_{l-1}\geq0$.

\begin{lemma}
The map $T_0(n)$ has no non-zero rational cycle elements with a denominator which is a multiple of $3$.
\end{lemma}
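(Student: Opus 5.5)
The plan is to use Lagarias' characterization of rational cycle elements quoted just above. By that result, any rational element $q$ of a cycle of $T_0$ can be written as
\begin{align*}
q=\frac{N}{D}, \qquad N=\sum_{i=0}^{l-1}3^i2^{a_i}, \qquad D=2^m-3^{l},
\end{align*}
with $l\geq0$, $m>l$ and $m>a_0>a_1>\ldots>a_{l-1}\geq0$. I will show that after reducing $N/D$ to lowest terms, the denominator cannot be divisible by $3$ unless $q=0$.

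The argument splits into two cases.

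\textbf{Case $l\geq 1$.} Here $D=2^m-3^l\equiv 2^m\not\equiv 0 \pmod 3$, so $3\nmid D$. The reduced denominator of $q$ is $D/\gcd(N,D)$, which divides $D$. Hence it is also not divisible by $3$.

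\textbf{Case $l=0$.} The sum $N$ is empty, so $N=0$ and $q=0$. This is exactly the zero cycle excluded by the word ``non-zero'' in the statement.

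Combining the two cases, every non-zero rational cycle element has a denominator coprime to $3$. That is the claim.

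There is one point of care, and I expect it to be the only real obstacle: interpreting ``denominator'' correctly. If the denominator is taken as the literal $D$ from the formula, the claim is immediate. If it means the denominator in lowest terms, I use that this reduced denominator divides $D$, as in the first case. Either way the key fact is simply $2^m-3^l\equiv 2^m\equiv\pm1\pmod 3$ whenever $l\geq1$.

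I would also remark that this is consistent with the setting. $T_0$ is only defined on rationals with odd denominator, and $D$ is odd whenever $l\geq 1$ because $2^m$ is even and $3^l$ is odd.
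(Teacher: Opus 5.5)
Your proposal is correct and follows the paper's proof. It uses the same split into the case $l=0$ (empty numerator, so $q=0$) and the case $l\geq 1$, where $2^m-3^l\equiv 2^m\not\equiv 0\pmod{3}$; your extra observation that the reduced denominator divides $2^m-3^l$ makes explicit a point the paper leaves implicit.
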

\begin{proof}
Suppose that $l=0$ then $2^m-3^0=2^m-1$ can be a multiple of $3$ (for instance for $m=4$) however in this case the sum in the numerator is $0$, because we sum from $i=0$ to $-1$ which is an empty sum and therefore $q=0$.

Now suppose that $l>0$ then the numerator is $2^m-3^l\equiv 2^m\pmod{3}$ and $2^m$ is never a multiple of $3$ by the
unique decomposition of numbers into prime numbers.
\end{proof}

Another way of looking at $T_{k}(n)$ is by introducing the function $L_{3^k}(n)=n/3^k$. Notice that $L_{3^k}(n)$ is a bijection on the rational numbers with the inverse function ${L_{3^k}}^{-1}(n)=3^kn$. 

We have that
\begin{align}
T_{k}(n)= {L_{3^k}}^{-1}\circ T_0 \circ L_{3^k}(n)
\end{align}
let us show this.
\begin{align}
\begin{aligned}
\label{TkL}
T_{k}(n)&=\begin{cases} 
(3n + 3^k) / 2, & \text{for odd $n$,}\\
n / 2,        & \text{for even $n$,}\\
\end{cases}\\
&=\begin{cases} 
3^k(3(n/3^k) + 1) / 2, & \text{for odd $n$,}\\
3^k(n/3^k) / 2,        & \text{for even $n$,}\\
\end{cases}\\
&={L_{3^k}}^{-1}\circ T_0 \circ L_{3^k}(n)
\end{aligned}
\end{align}

If we look at our equation in (\ref{TkL}) then we have the following commutative diagram
\[
\xymatrix{n/3^k\ar[r]^{T_0}&T_0(n/3^k)\ar[d]^{{L_{3^k}}^{-1}}\\
n\ar[r]_{T_k}\ar[u]^{L_{3^k}} & T_k(n)}
\]
If we use the notation $\mathbb{Q}[(2)]$ for all rational numbers with odd denominator and we iterate the above diagram then we get the following diagram
\[
\xymatrix{
\mathbb{Q}[(2)]\ar[r]^{T_0}& \mathbb{Q}[(2)]\ar[d]_{{L_{3^k}}^{-1}}\ar[r]^{id}&\mathbb{Q}[(2)]\ar[r]^{T_0}&\mathbb{Q}[(2)]\ar[d]_{{L_{3^k}}^{-1}} \\
\mathbb{Q}[(2)]\ar[u]^{L_{3^k}}\ar[r]_{T_k}  &\mathbb{Q}[(2)]\ar[ru]_{L_{3^k}}\ar[rr]_{T_k}&&\mathbb{Q}[(2)]
}
\]
giving us that 
\begin{align}
T_k(T_k(n))={L_{3^k}}^{-1}(T_0(T_0(L_{3^k}(n))))
\end{align}
and we can keep doing this, because $L_{3^k}$ and ${L_{3^k}}^{-1}$ cancel eachother out. We will use this extensively in the proof of the next corollary.

\begin{corollary}
$T_k(n)$ has exactly the same integer cycles as $T_0(n)$ only multiplied by $3^k$.
\end{corollary}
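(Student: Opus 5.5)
The plan is to use the conjugation identity $T_k = {L_{3^k}}^{-1}\circ T_0 \circ L_{3^k}$ from (\ref{TkL}), iterated as in the second diagram. This gives $T_k^{j}(n) = 3^k\, T_0^{j}(n/3^k)$ for all $j\ge 0$ and all $n\in\mathbb{Q}[(2)]$. One point needs care first. The parity of $n$ and of $n/3^k$ agree in $\mathbb{Q}[(2)]$: dividing by the odd number $3^k$ does not change the residue class modulo $2$ of a rational with odd denominator. So the case split in $T_k$ matches the case split in $T_0$, and the identity holds on all of $\mathbb{Q}[(2)]$, not only on multiples of $3^k$. Consequently $L_{3^k}$ is a bijection of $\mathbb{Q}[(2)]$ that carries cycles of $T_k$ onto cycles of $T_0$ (of the same length), with inverse carrying $T_0$-cycles to $T_k$-cycles.

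\emph{Direction 1.} Let $C=\{c_1,\dots,c_p\}$ be an integer cycle of $T_0$. Then $3^kC$ is a $T_k$-cycle by the identity above, and it consists of integers. This direction is immediate, and it is essentially Corollary~2 applied along the cycle.

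\emph{Direction 2.} Let $D=\{d_1,\dots,d_p\}$ be an integer cycle of $T_k$. Then $D/3^k$ is a rational cycle of $T_0$ whose elements lie in $\mathbb{Q}[(2)]$. I must show each $d_i/3^k$ is in fact an integer, i.e.\ that $3^k \mid d_i$.
\begin{itemize}
\item If $D=\{0\}$, this is trivial.
\item Otherwise, write $d_i/3^k$ in lowest terms. If $3^k\nmid d_i$, the reduced denominator is a positive power of $3$, so $d_i/3^k$ is a non-zero rational cycle element of $T_0$ with denominator a multiple of $3$. That contradicts the Lemma.
\end{itemize}
Hence $D/3^k$ is an integer $T_0$-cycle and $D = 3^k(D/3^k)$, which gives the bijection between integer cycles claimed in the statement.

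The main obstacle is the appeal to the Lemma, which rests on Lagarias' description of rational cycle elements as $q=\sum 3^i2^{a_i}/(2^m-3^l)$. The Lemma's argument shows $3\nmid 2^m-3^l$ for $l>0$. This controls the denominator of the \emph{unreduced} form, and therefore of the reduced one too, since reduction only removes factors. So every nonzero cycle element has denominator prime to $3$, which is exactly what Direction 2 needs. I would state this reduction remark explicitly. I would also check the parity-compatibility claim on $\mathbb{Q}[(2)]$, since without it the conjugation would only be valid on multiples of $3^k$ and Direction 2 would fail.
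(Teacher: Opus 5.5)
Your proposal is correct and follows essentially the same route as the paper. You conjugate by $L_{3^k}$ to get ${T_k}^{(l)}={L_{3^k}}^{-1}\circ {T_0}^{(l)}\circ L_{3^k}$, use the Lemma on denominators of rational $T_0$-cycle elements to force $3^k\mid q$ for every integer $T_k$-cycle element $q$, and verify the converse by direct computation on $3^k q$. Your explicit remarks on parity being preserved under division by the odd number $3^k$, and on reduced versus unreduced denominators, spell out two points the paper uses only implicitly.
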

\begin{proof}
Let $q$ be a element of an integral cycle of $T_k(n)$ of length $l$ then
\begin{align}
\begin{aligned}
q&={T_k}^{(l)}(q)\\
&={L_{3^k}}^{-1}\circ T_0\circ L_{3^k}\circ {L_{3^k}}{-1}\circ T_0\circ L_{3^k}\circ\\
&\ \ \ \ \ \underbrace{\ldots\circ {L_{3^k}}^{-1}\circ T_0\circ L_{3^k}(q)}_{l}\\
&={L_{3^k}}^{-1}\circ \underbrace{T_0\circ\ldots\circ T_0}_{l}\circ L_{3^k}(q)\\
&={L_{3^k}}^{-1}\circ {T_0}^{(l)}\circ L_{3^k}(q)
\end{aligned}
\end{align}
where the ${L_{3^k}}^{-1}\circ T_0\circ L_{3^k}$ is composed exactly $l$ times on the second and third line.
By applying $L_{3^k}$ from the left on the first part of the equation above and the last part of the equation gives us:
\begin{align}
q/3^k = {T_0}^{(l)}(q/3^k)
\end{align}
so $q/3^k$ is a rational cycle element of $T_0$ however by the previous there are no rational cycle elements where the denominator is a multiple of $3$. Therefore we have that $3^k|q$ and hence $q=3^kq'$. Now we have that $q'$ is a fixed point of ${T_0}^{(l)}$ since
\begin{align}
\begin{aligned}
q/3^k &= {T_0}^{(l)}(q/3^k)\\
3^kq'/3^k&={T_0}^{(l)}(3^kq'/3^k)\\
q'&={T_0}^{(l)}(q')\
\end{aligned}
\end{align}
so any integral cycle element of $T_k$ is a integral cycle element of $T_0$ multiplied by $3^k$.

Now suppose that $q$ is an integral cycle element of $T_0$ then we claim that ${T_{k}}^{(l)}(3^kq)=3^kq$. We have seen that
\begin{align}
{T_k}^{(l)}(n)={L_{3^k}}^{-1}\circ {T_0}^{(l)}\circ L_{3^k}(n)
\end{align}
hence
\begin{align}
\begin{aligned}
{T_k}^{(l)}(3^kq)&={L_{3^k}}^{-1}\circ {T_0}^{(l)}\circ L_{3^k}(3^kq)\\
&={L_{3^k}}^{-1}\circ {T_0}^{(l)}(3^kq/3^k)\\
&={L_{3^k}}^{-1}\circ {T_0}^{(l)}(q)\\
&={L_{3^k}}^{-1}(q)\\
&=3^kq
\end{aligned}
\end{align}
\end{proof}

In \cite{Lagarias1990} a special type of generalization of $T_0(n)$ is studied. Namely
\begin{align}
\begin{aligned}
	T_O(n) &= \begin{cases}
			(3n + O) / 2, & \text{for odd $n$,}\\
			n / 2,        & \text{for even $n$,}\\
		\end{cases}
\end{aligned}
\end{align}
where $O\equiv\pm1\pmod{6}$. So $O$ is either of the form $O=6o+1$ or $O=6o-1$ for some integer $o$. So in particular $O$ will never be a multiple of $3$. We will extend the result from the previous corollary to this generalization $T_O(n)$.

\begin{theorem}
The integral cycles of the map $T_{3^kO}(n)$ are exactly the integral cycles of $T_O(n)$ multiplied by $3^k$.
\end{theorem}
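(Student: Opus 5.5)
We will follow the proof of the previous corollary, replacing $T_0$ by $T_O$ and $T_k$ by $T_{3^kO}$. The first step is the conjugation identity
\begin{align}
T_{3^kO}(n) = {L_{3^k}}^{-1}\circ T_O\circ L_{3^k}(n),
\end{align}
valid on $\mathbb{Q}[(2)]$. It is checked exactly as in (\ref{TkL}). For odd $n$ we write $(3n+3^kO)/2 = 3^k\bigl(3(n/3^k)+O\bigr)/2$, and for even $n$ we write $n/2 = 3^k(n/3^k)/2$. The parity of $n$ and of $n/3^k$ agree, since $3^k$ is odd and parity on $\mathbb{Q}[(2)]$ is determined by the numerator. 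Iterating and cancelling $L_{3^k}\circ{L_{3^k}}^{-1}$ gives ${T_{3^kO}}^{(l)} = {L_{3^k}}^{-1}\circ {T_O}^{(l)}\circ L_{3^k}$ for every $l$.

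The easy inclusion comes next. If $q$ is an integral cycle element of $T_O$ with ${T_O}^{(l)}(q)=q$, then ${T_{3^kO}}^{(l)}(3^kq) = 3^k{T_O}^{(l)}(q) = 3^kq$, and $3^kq$ is an integer. So $3^k$ times every integral cycle of $T_O$ is an integral cycle of $T_{3^kO}$.

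For the converse, let $q$ be an integral cycle element of $T_{3^kO}$ of length $l$. Then $q/3^k$ is a rational cycle element of $T_O$. We must show that $3^k \mid q$, and this is the main step. We need the analogue of the earlier lemma for $T_O$: a nonzero rational cycle element of $T_O$ cannot have a denominator divisible by $3$. Following Lagarias' computation, the cycle equation for $T_O$ with parity vector having $l$ odd steps among $m$ steps gives
\begin{align}
q' = \frac{O\sum_{i=0}^{l-1}3^i2^{a_i}}{2^m-3^l},
\end{align}
since the derivation is linear in the additive constant. If $l>0$, the denominator is $\equiv 2^m \not\equiv 0 \pmod 3$. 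If $l=0$, the numerator is an empty sum, so $q'=0$. Hence a cycle element of $T_O$ either has denominator prime to $3$ or is $0$. The case $q=0$ is harmless, because $0 = 3^k\cdot 0$ and $0$ is a fixed point of $T_O$. Therefore $q/3^k$ has no factor $3$ in its denominator. Since $q$ is an integer, this means $3^k \mid q$, so $q = 3^kq'$ with $q'\in\mathbb{Z}$ and ${T_O}^{(l)}(q')=q'$.

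Thus the integral cycles of $T_{3^kO}$ are precisely $3^k$ times those of $T_O$. The only place needing care is recomputing the cycle formula with the constant $O$. If one prefers not to rederive it, an alternative is to observe that $T_O$ is conjugate to $T_1$ via $n\mapsto n/O$ on $\mathbb{Q}[(2)]$, since $O$ is odd. This transfers Lagarias' form and multiplies it by $O$, and $O$ is not divisible by $3$. Either way, the hypothesis $O\equiv\pm1\pmod 6$ is used only to make $O$ odd, which keeps the conjugation parity-preserving.
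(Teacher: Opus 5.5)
Your proof is correct and is essentially the paper's argument. You conjugate $T_{3^kO}$ to $T_O$ via $L_{3^k}$, read off the easy inclusion, and for the converse force $3^k\mid q$ because nonzero rational cycle elements of $T_O$ have denominators prime to $3$; the paper gets that fact by conjugating once more with $L_O$ down to the Collatz map and citing its lemma (your ``alternative'', where your $T_1$ should read $T_0$ in the paper's notation), while you rescale Lagarias' cycle formula by $O$, which is the same computation and correctly shows that only the oddness of $O$ is used.
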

\begin{proof}
First note that for any non-zero integers $A$ and $B$ we have 
\begin{align}
\begin{aligned}
L_A(L_B(n))&=n/B/A=n/(B\cdot A)\\
&=n/(A\cdot B)=n/A/B=L_B(L_A(n))
\end{aligned}
\end{align}
likewise we have ${L_B}^{-1}({L_A}^{-1}(n))={L_A}^{-1}({L_B}^{-1}(n))$.

Just as in the case of $T_k(n)$ we have $T_{O}(n)={L_{O}}^{-1}(T_0(L_O(n)))$ and $T_{3^kO}(n)={L_{3^kO}}^{-1}(T_0(L_{3^kO}(n)))$. And since $L_{3^kO}(n)=L_{3^k}(L_O(n))$ we have that
\begin{align}
\begin{aligned}
T_{3^kO}^{(l)}(n)&={L_{3^kO}}^{-1}\circ T_0\circ L_{3^kO}\circ {L_{3^kO}}^{-1}\circ T_0\circ L_{3^kO}\circ\\
&\ \ \ \ \ \underbrace{\ldots\circ {L_{3^kO}}^{-1}\circ T_0\circ L_{3^kO}(n)}_{l}\\
&={L_{3^k}}^{-1}\circ {L_{O}}^{-1}\circ T_0\circ L_O \circ L_{3^k}\circ\\
&\ \ \ \ \ \underbrace{\ldots\circ {L_{3^k}}^{-1}\circ{L_O}^{-1}\circ T_0\circ L_O\circ L_{3^k}}_{l}(n)\\
&={L_{3^k}}^{-1}\circ T_O\circ L_{3^k}\circ {L_{3^k}}^{-1}\circ T_O\circ L_{3^k}\circ\\
&\ \ \ \ \ \underbrace{\ldots\circ {L_{3^k}}^{-1}\circ T_O \circ L_{3^k}}_{l}(n)\\
&={L_{3^k}}^{-1}\circ\underbrace{T_O\circ\ldots T_O}_{l}\circ L_{3^k}(n)\\
&={L_{3^k}}^{-1}\circ {T_O}^{(l)}\circ L_{3^k}(n)
\end{aligned}
\end{align}
Where the compositions on the first and second are exactly $l$ times, the compositions on the third and fourth line are exactly $l$ times and the compositions on lines five and six are $l$ times.
Likewise we have 
\begin{align}
{T_O}^{(l)}(n)={L_{O}}^{-1}({T_k}^{(l)}(L_O(n)))
\end{align}
and 
\begin{align}
{T_O}^{(l)}(n)={L_{3^kO}}^{-1}({T_0}^{(l)}(L_{3^kO}(n)))
\end{align}
Now suppose that $q$ is a integral cycle element of $T_{3^kO}(n)$ and let $l$ be the length of this cycle. Then we have that
\begin{align}
\begin{aligned}
q&={L_{3^k}}({T_O}^{(l)}(L_{3^k}(q))) \Leftrightarrow\\
q/3^k&={T_O}^{(l)}(q/3^k)
\end{aligned}
\end{align}
hence $q/3^k$ is a (fractional) cycle element of $T_O(n)$. But we also have that 
\begin{align}
\begin{aligned}
q/3^k={T_O}^{(l)}(q/3^k)&={L_O}^{-1}(T_0^{(l)}(L_O(q/3^k))) \Leftrightarrow\\
q/(3^kO)&={T_0}^{(l)}(q/(3^kO))
\end{aligned}
\end{align}
hence we have that $q/(3^kO)$ is a (rational) cycle element of $T_0(n)$. However we have seen that there are no rational cycle elements of $T_0(n)$ with a denominator divisible by $3$. Therefore $q=3^k\cdot q'$ and $q'/O$ is a (rational) cycle element of $T_0(n)$. And therefore $q/3^k=3^k\cdot q'/3^k = q'$ is an integral cycle element of $T_O(n)$.

To complete the proof we have 
\begin{align}
\begin{aligned}
{T_{3^kO}}^{(l)}(q)&={L_{3^k}}({T_O}^{(l)}(q/3^k))\\
&=3^k\cdot{T_O}^{(l)}(q')\\
&=3^k\cdot q' = q
\end{aligned}
\end{align}
so any cycle element of $T_{3^kO}(n)$ is $3^k$ times a cycle element of $T_O(n)$. And suppose that $q'$ is a cycle element of $T_O(n)$ then we have
\begin{align}
\begin{aligned}
{T_{3^kO}}^{(l)}(3^k\cdot q')&={L_{3^k}}^{-1}({L_O}^{(l)}(L_{3^k}(3^k\cdot q')))\\
&=3^k\cdot{L_O}^{(l)}(3^k\cdot q'/3^k)\\
&=3^k\cdot{L_O}^{(l)}(q')\\
&=3^k\cdot q'
\end{aligned}
\end{align}
so then $3^k\cdot q'$ is a integral cycle element of $T_{3^kO}(n)$.
\end{proof}

\subsection*{Conclusion}
We have seen that $T_k(n)$ behaves on multiples of $3^k$ exactly as $T_0(n)$ and we have also seen that any $n$ becomes a multiple of $3^k$ after finitely many steps.
Therefore studying $T_k(n)$ gives us no new information about $T_0(n)$.
However it could be the case that someone is able to prove the conjecture for $T_k(n)$ which automatically means that they prove the conjecture for $T_0(n)$. Likewise we have seen that $T_{3^kO}(n)$ behaves on multiples of $3^k$ exactly like $T_O(n)$ and that they have the same integral cycles (up to a factor $3^k$). So all heuristics for $T_O(n)$ also hold for $T_{3^kO}(n)$.

\titleformat{\section}[block]{\filright\bfseries}{\thesection}{1em}{}
\section*{Acknowledgment}\affSettings
This work was supported by the Ministry of Education, Youth and Sports of the Czech Republic through the e-INFRA CZ (ID:90254).

\titleformat{\section}[block]{\filright\bfseries}{\thesection}{1em}{}
\bibliographystyle{plain}
\bibliography{sources}
\end{multicols}

\clearpage
\vspace{1cm}

\begingroup
\bioSettings
\noindent \begin{tabular}{ l  p{13.4cm}}
\parbox[l]{3.0cm}{\fcolorbox{gray}{white}{\includegraphics[width=3cm]{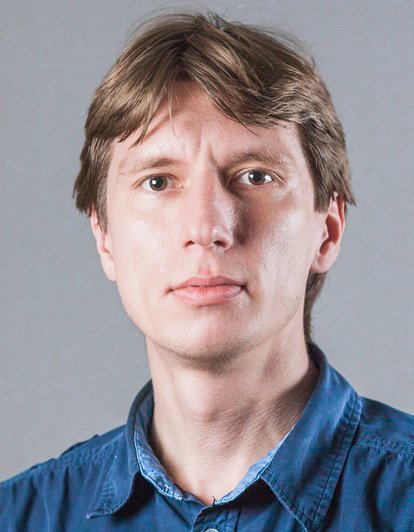}}} &  
\parbox{13.7cm}{{\bf David Barina} 
received the PhD degree at the Faculty of Information Technology, Brno University of Technology, Czech Republic. He is currently a member of the Graph@FIT group at the Department of Computer Graphics and Multimedia at FIT, Brno University of Technology. His research interests include wavelets and fast algorithms in signal and image processing.
}\\

\vspace*{0.2cm}\\
\parbox[l]{3.0cm}{\fcolorbox{gray}{white}{\includegraphics[width=3cm]{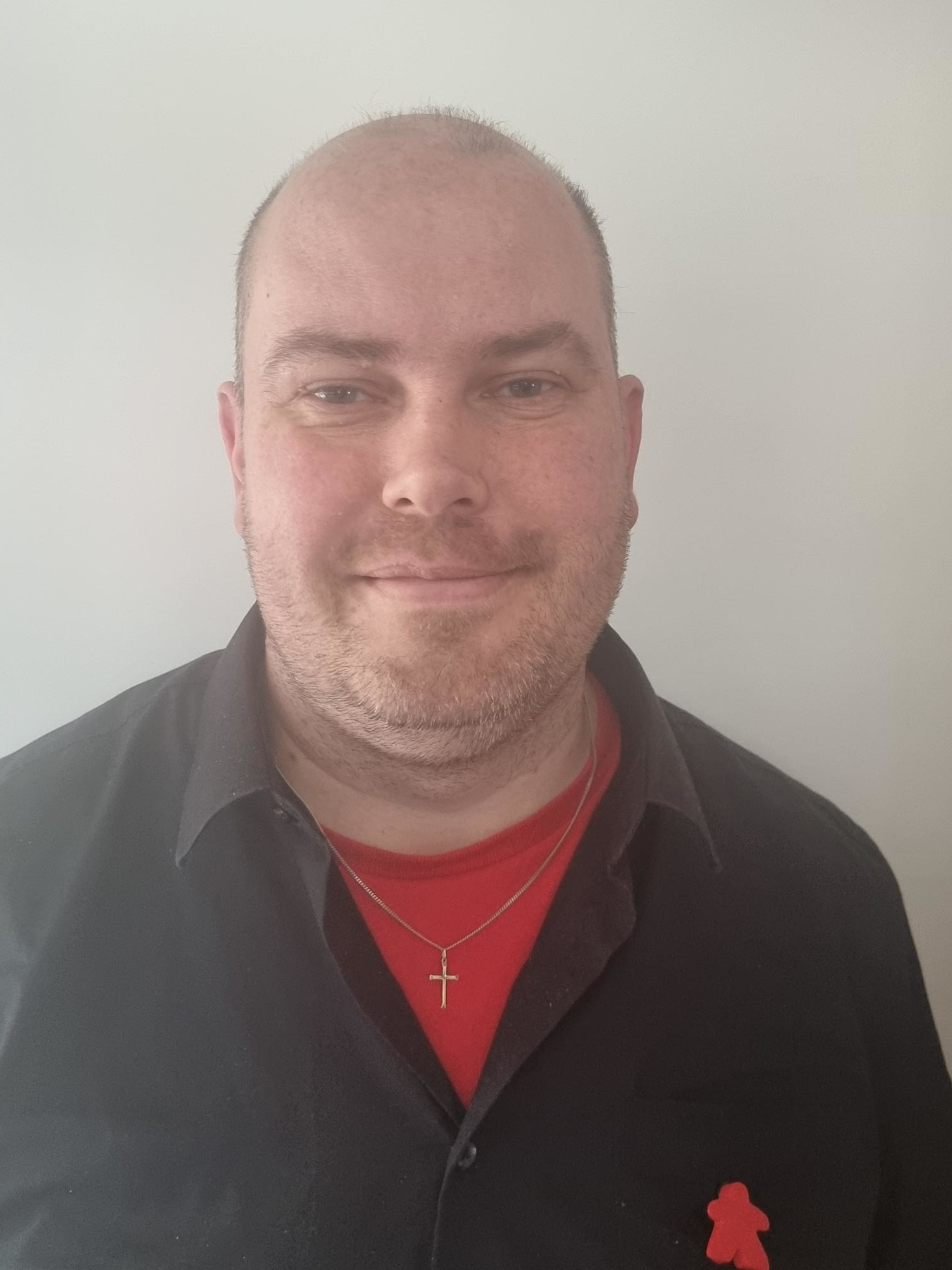}}} &  
\parbox{13.7cm}{{\bf Willem Maat} 
received the master's degree at the Faculty of Mathematics and Information Technology, Utrecht University, The Netherlands. He currently works for an insurance company as a computer programmer and does some mathematics in his spare time.
}
\end{tabular}
\endgroup
\end{document}